\newtheorem{theorem}{Theorem}
\newtheorem{question}{Question}
\begin{document}

\baselineskip=17pt

\title{\bf On $\boldsymbol{\psi}$-quadratic $k$-tuples and their generalizations}

\author{\bf S. I. Dimitrov}

\date{2025}

\maketitle
\begin{abstract}
In this paper, we introduce the notion of $\psi$-quadratic $k$-tuples. We also give examples, prove some properties and propose generalizations of these new concepts.\\
\quad\\
\textbf{Keywords}:  Dedekind $\psi(n)$ function, $\psi$-quadratic $k$-tuples.\\
\quad\\
{\bf  2020 Math.\ Subject Classification}: 11A25 $\cdot$ 11D72 
\end{abstract}

\section{Notations}
\indent

The letter $p$, with or without a subscript, will always denote prime number.
Let $n>1$ be positive integer with prime factorization
\begin{equation*}
n=p^{a_1}_1\cdots p^{a_r}_r\,.
\end{equation*}
We define the Dedekind function $\psi(n)$ by the formula
\begin{equation}\label{psiform1}
\psi(n)=n \prod\limits_{i=1}^{r}\left(1+\frac{1}{p_i}\right) \quad \mbox{ and } \quad \psi(1)=1\,.
\end{equation}

\section{Introduction}
\indent

The Dedekind function $\psi(n)$ has been studied extensively in connection with various arithmetic properties of integers. 
Its role in generalizations of classical notions such as perfect and amicable numbers has attracted particular attention in recent years. 
Motivated by these developments, in the present paper we introduce and investigate $\psi$-quadratic $k$-tuples and their generalizations, defined through polynomial identities involving $\psi(n)$ and sums of powers of integers.
We establish the non-existence of $\psi$-quadratic pairs and provide a partial analysis of $\psi$-quadratic triples. 
Moreover, we construct explicit families of $\psi$-quadratic quadruples and extend the study to $\psi$-cubic, $\psi$-quartic and $\psi$-quintic tuples, where several infinite classes and open questions naturally arise.
The results presented here extend previous works of the author \cite{Dimitrov1}, \cite{Dimitrov2} on $\sigma$-quadratic $k$-tuples and $\psi$-amicable numbers, respectively. 
They contribute to the systematic study of Diophantine equations involving the Dedekind function $\psi(n)$ and can be regarded as power generalizations of $\psi$-amicable numbers.
Many articles have addressed related problems.
We point out the papers \cite{Beck}, \cite{Bozarth}, \cite{Carmichael}, \cite{Cohen}, \cite{Dickson}, \cite{Erdos1}, \cite{Erdos2}, \cite{Garcia}, 
\cite{Hagis1}, \cite{Hagis2}, \cite{Lal}, \cite{Mason}, \cite{Pollack}, \cite{Pomerance1}, \cite{Pomerance2}, \cite{Pomerance3}, \cite{Rieger}, \cite{Yanney}.
Many other similar results can be found in the literature.

\section{$\mathbf{\Psi}$-Quadratic pairs}
\indent

The positive integers $x$ and $y$ form a $\psi$-quadratic pair if 
\begin{equation}\label{psi^2}
\psi^2(x)=x^2+y^2\,.
\end{equation}
With the following theorem, we establish that no $\psi$-quadratic pairs exist.
\begin{theorem}\label{Theorem1}
The equation \eqref{psi^2} has no solutions in positive integers.  
\end{theorem}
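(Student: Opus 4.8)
The plan is to rewrite \eqref{psi^2} as the Pythagorean relation $x^2+y^2=\psi^2(x)$ and extract a contradiction from the $2$-adic valuations of the three terms, reducing matters to a single family that is then eliminated by a congruence and a factorization. First I would dispose of $x=1$ and $x=2$ directly, since $\psi(1)=1$ and $\psi(2)=3$ give $y^2=0$ and $y^2=5$, neither with a positive integer $y$. For $x\ge 3$ I would observe that $\psi(x)$ is even: in the multiplicative form $\psi(n)=\prod_i p_i^{a_i-1}(p_i+1)$ the factor attached to any odd prime is even, and the only $x$ with $\psi(x)$ odd are $x=1,2$. Hence $\psi^2(x)\equiv 0\pmod 4$, so $x^2+y^2\equiv 0\pmod 4$, which forces both $x$ and $y$ to be even and in particular rules out odd $x\ge 3$ (that would require $y^2\equiv 3\pmod 4$).

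The core is a valuation analysis. Put $a=v_2(x)$, $c=v_2(y)$, $e=v_2(\psi(x))$, all positive. If $a=c$, then $x^2+y^2=2^{2a}(x_1^2+y_1^2)$ with $x_1,y_1$ odd, and since odd squares are $1\pmod 8$ one gets $v_2(x^2+y^2)=2a+1$, odd, contradicting the even value $2e$. Thus $a\ne c$ and $v_2(x^2+y^2)=2\min(a,c)$, forcing $e=\min(a,c)\le a$. For $x=2^a m$ with $m>1$ odd, however, $e=(a-1)+\sum_{q\mid m}v_2(q+1)\ge a$, so $e=a$, the sum equals $1$, and therefore $x=2^a q^b$ for a single prime $q\equiv 1\pmod 4$. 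The pure power case $x=2^a$ is separate: there $\psi(x)=3\cdot 2^{a-1}$ and the equation collapses to $y^2=5\cdot 2^{2a-2}$, impossible since $5$ is not a square.

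It remains to eliminate $x=2^a q^b$ with $q\equiv 1\pmod 4$ prime, and this is the step I expect to be the main obstacle. Writing $q=2r-1$ with $r$ odd gives $\psi(x)=3\cdot 2^{a}q^{b-1}r$, whence $y^2=2^{2a}q^{2b-2}(9r^2-q^2)$ and $9r^2-q^2=(r+1)(5r-1)$. Since $2^{2a}$ and $q^{2b-2}$ are squares and $q\nmid(r+1)(5r-1)$ (as $q\ne 3$), the problem reduces to $(r+1)(5r-1)$ being a perfect square; substituting $q=4k+1$, i.e. $r=2k+1$, this is the condition that $(k+1)(5k+2)$ be a square. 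I would split on whether $3\mid k+1$. If $3\nmid k+1$ the two factors are coprime, so each is a square; writing $k+1=s^2$ yields $5k+2=5s^2-3\equiv 2\pmod 5$, a non-residue — contradiction. If $3\mid k+1$, then $(k+1)(5k+2)=9u(5u-1)$ with $u=(k+1)/3$ and $\gcd(u,5u-1)=1$, forcing $u=s^2$; but then $q=4k+1=12s^2-3=3(2s-1)(2s+1)$ is divisible by $3$ and exceeds $3$, hence composite, contradicting primality. Both branches being excluded, \eqref{psi^2} has no solution, proving Theorem~\ref{Theorem1}. The delicate feature throughout is precisely this last family: the underlying negative Pell equation $t^2-5s^2=-1$ has infinitely many solutions (e.g. $s=1$ gives $q=9$), so the argument cannot merely bound $r$ — it must show that every such solution renders $q$ composite.
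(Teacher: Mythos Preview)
Your proof is correct. The endgame coincides with the paper's Case~4.2: both of you land on $x=2^{a}q^{b}$ with $q\equiv 1\pmod 4$, reduce to whether $(k+1)(5k+2)$ is a square where $q=4k+1$, kill the coprime sub-case via $5k+2\equiv 2\pmod 5$, and kill the $3\mid k+1$ sub-case by observing that it forces $3\mid q$. Where you diverge is in the reduction to that family. The paper factors $y^{2}=(\psi(x)-x)(\psi(x)+x)$, writes the two factors as $da^{2}$ and $db^{2}$ with $\gcd(a,b)=1$, and then runs a five-way case split on the shape of $x$ (namely $2^{k}$; odd; $2^{k}3^{r}$; $2^{k}p^{r}$ with $p\neq 3$; and $2^{k}$ times at least two odd primes), using the identity $d(b^{2}-a^{2})=2x$ together with parity of $a,b$ to eliminate everything except the $p\equiv 1\pmod 4$ sub-case. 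Your $2$-adic valuation argument accomplishes the same reduction in one stroke: from $v_{2}(x^{2}+y^{2})=2e$ you get $e=\min(v_{2}(x),v_{2}(y))\le v_{2}(x)$, while the explicit formula $e=(a-1)+\sum_{q\mid m}v_{2}(q+1)$ forces $e\ge a$ as soon as $m>1$, which pins $m$ down to a prime power $q^{b}$ with $v_{2}(q+1)=1$. This is tidier than the paper's enumeration and makes transparent \emph{why} only primes $q\equiv 1\pmod 4$ survive. Your closing remark on the negative Pell equation $t^{2}-5s^{2}=-1$ is a useful gloss: it explains why the $3\mid k+1$ branch cannot be dismissed by size considerations and genuinely requires the primality of $q$.
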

\begin{proof} 
Put
\begin{equation}\label{uv}
u=\psi(x)-x\,, \qquad v=\psi(x)+x\,,  \qquad  d=\gcd(u,v)\,.
\end{equation}
Now \eqref{psi^2} and \eqref{uv} imply
\begin{equation*}
uv=y^2\,.
\end{equation*}
Bearing in mind that
\begin{equation*}
u=d u_1\,,\qquad v=d v_1\,,\qquad \gcd(u_1,v_1)=1
\end{equation*}
we deduce
\begin{equation*}
uv=d^2 u_1 v_1=y^2\,.
\end{equation*}
Consequently both $u_1$ and $v_1$ must be perfect squares. Thus
\begin{equation}\label{uvdab}
u=d a^2\,,\qquad v=d b^2\,,\qquad \gcd(a,b)=1
\end{equation}
for some $a,b\in \mathbb{Z}$. Therefore
\begin{equation*}
v-u=d(b^2-a^2)
\end{equation*}
which, together with \eqref{uv}, yields
\begin{equation}\label{b^2-a^2}
d(b^2-a^2)=2x\,.
\end{equation}
We consider five cases.

\smallskip

\textbf{Case 1} 
\begin{equation}\label{Case1}
x=2^k\,,\quad k\geq1\,.
\end{equation}
From \eqref{psiform1}, \eqref{uv}, \eqref{uvdab}  and \eqref{Case1}, we obtain 
\begin{equation*}
u=2^{k-1}\,, \quad v=5\cdot2^{k-1}\,, \quad d=2^{k-1}\,, \quad a^2=1\,, \quad b^2=5\,,
\end{equation*}
which is impossible.

\smallskip

\textbf{Case 2} 
\begin{equation}\label{Case2}
x=p^{a_1}_1\cdots p^{a_r}_r\,,\quad r\geq1\,,
\end{equation}
where $p_1,\ldots,p_r$ are odd prime numbers. Using \eqref{psiform1}, \eqref{uv} and \eqref{Case2}, we get
\begin{equation*}
u=p_1^{a_1-1}\cdots p_r^{a_r-1}(p_1+1)\cdots(p_r+1)-p^{a_1}_1\cdots p^{a_r}_r \,,
\end{equation*}
\begin{equation*}
v=p_1^{a_1-1}\cdots p_r^{a_r-1}(p_1+1)\cdots(p_r+1)+p^{a_1}_1\cdots p^{a_r}_r \,.
\end{equation*}
Since $u$ and $v$ are odd, it follows from \eqref{uvdab} that $a$ and $b$ are odd. Then
\begin{equation}\label{b^2a^2mod4}
b^2-a^2 \equiv 0 \pmod 8\,.
\end{equation}
It is clear that \eqref{Case2} and \eqref{b^2a^2mod4} contradict \eqref{b^2-a^2}.

\smallskip

\textbf{Case 3} 
\begin{equation}\label{Case3}
x=2^k3^r\,,\quad k\geq1\,,\quad r\geq1\,.
\end{equation}
From \eqref{psiform1}, \eqref{uv}, \eqref{uvdab} and \eqref{Case3}, we derive
\begin{equation*}
u=2^k3^r\,, \quad v=2^k3^{r+1}\,, \quad d=2^k3^r\,, \quad a^2=1\,, \quad b^2=3\,,
\end{equation*}
which is impossible.

\smallskip

\textbf{Case 4} 
\begin{equation}\label{Case4}
x=2^kp^r\,,\quad p\neq3\,,\quad k\geq1\,,\quad r\geq1\,.
\end{equation}
Now \eqref{psiform1}, \eqref{uv} and \eqref{Case4} give us
\begin{equation}\label{Case4uv}
u=2^{k-1}p^{r-1}(p+3)\,, \quad v=2^{k-1}p^{r-1}(5p+3)\,.
\end{equation}

\textbf{Case 4.1} 
\begin{equation}\label{Case41}
p=4l+3\,,\quad l\geq1\,.
\end{equation}
By \eqref{Case4uv} and \eqref{Case41}, we have
\begin{equation}\label{Case41uv}
u=2^k(4l+3)^{r-1}(2l+3)\,, \qquad v=2^k(4l+3)^{r-1}(10l+9)\,.
\end{equation}
Taking into account \eqref{uvdab} and \eqref{Case41uv}, we conclude that
\begin{equation}\label{d0mod4}
d \equiv 0 \pmod {2^k}\,,
\end{equation}
and that $a$ and $b$ are odd, which in turn implies \eqref{b^2a^2mod4}.
It is easy to see that \eqref{b^2a^2mod4}, \eqref{Case4} and \eqref{d0mod4} contradict \eqref{b^2-a^2}.

\smallskip

\textbf{Case 4.2} 
\begin{equation}\label{Case42}
p=4l+1\,,\quad l\geq1\,.
\end{equation}
From \eqref{Case4uv} and \eqref{Case42}, we write
\begin{equation}\label{Case42uv}
u=2^{k+1}(4l+1)^{r-1}(l+1)\,, \qquad v=2^{k+1}(4l+1)^{r-1}(5l+2)\,.
\end{equation}
\smallskip

\textbf{Case 4.2.1} 
\begin{equation}\label{Case421}
(l+1, 5l+2)=1\,.
\end{equation}
Now \eqref{uvdab}, \eqref{Case42uv} and \eqref{Case421} lead to
\begin{equation*}
d=2^{k+1}(4l+1)^{r-1}\,, \quad a^2=l+1\,, \quad b^2=5l+2\,,
\end{equation*}
which yields an impossible congruence
\begin{equation*}
b^2 \equiv 2 \pmod 5\,,
\end{equation*}
because the squares are congruent only to 0, 1 or 4 modulo 5.

\smallskip

\textbf{Case 4.2.2} 
\begin{equation}\label{Case422}
(l+1, 5l+2)=3\,.
\end{equation}
Using \eqref{uvdab}, \eqref{Case42uv} and \eqref{Case422}, we obtain
\begin{equation}\label{Case422ab}
d=3\cdot2^{k+1}(4l+1)^{r-1}\,, \quad a^2=\frac{l+1}{3}\,, \quad b^2=\frac{5l+2}{3}\,.
\end{equation}
Apparently, \eqref{Case4} and \eqref{Case422ab} contradict \eqref{b^2-a^2}.

\smallskip

\textbf{Case 5} 
\begin{equation}\label{Case5}
x=2^kp^{a_1}_1\cdots p^{a_r}_r\,,\quad k\geq1\,,\quad r\geq2\,.
\end{equation}
By \eqref{psiform1}, \eqref{uv} and \eqref{Case5}, we derive
\begin{equation}\label{Case5u}
u=3\cdot2^{k-1}p_1^{a_1-1}\cdots p_r^{a_r-1}(p_1+1)\cdots(p_r+1)-2^kp^{a_1}_1\cdots p^{a_r}_r \,,
\end{equation}
\begin{equation}\label{Case5v}
v=3\cdot2^{k-1}p_1^{a_1-1}\cdots p_r^{a_r-1}(p_1+1)\cdots(p_r+1)+2^kp^{a_1}_1\cdots p^{a_r}_r \,.
\end{equation}
Bearing in mind \eqref{uvdab}, \eqref{Case5u} and \eqref{Case5v}, we establish \eqref{d0mod4} and that $a$ and $b$ are odd, which in turn implies \eqref{b^2a^2mod4}.
It is clear that \eqref{b^2a^2mod4}, \eqref{d0mod4} and \eqref{Case5} contradict \eqref{b^2-a^2}.
This completes the proof of Theorem \ref{Theorem1}.

\end{proof}

\section{$\mathbf{\Psi}$-Quadratic triples}
\indent

The positive integers $a\leq b$ and $c$ form a $\psi$-quadratic triple if 
\begin{equation}\label{psi^2ab}
\psi^2(a)=\psi^2(b)=a^2+b^2+c^2\,.
\end{equation}

\begin{theorem}\label{Theorem2} For each $k\geq1$, the triples of the form $(2^k, 2^k, 2^{k-1})$ are $\psi$-quadratic triples.
Moreover, if $(a, b, c)$ is a $\psi$-quadratic triple with $a=b$, then
\begin{equation*}
a=b=2^k\,, \quad c=2^{k-1}\,, \quad k\geq1\,.
\end{equation*}
\end{theorem}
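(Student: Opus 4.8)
The plan is to settle the construction by a direct computation and then reduce the uniqueness claim to an elementary study of the factorisation of $a$. For the first assertion I would use \eqref{psiform1} to get $\psi(2^k)=2^k\bigl(1+\tfrac12\bigr)=3\cdot2^{k-1}$, hence $\psi^2(2^k)=9\cdot4^{k-1}$, and then check $2(2^k)^2+(2^{k-1})^2=8\cdot4^{k-1}+4^{k-1}=9\cdot4^{k-1}$. This verifies \eqref{psi^2ab} for the triple $(2^k,2^k,2^{k-1})$; moreover, once $a=b=2^k$ is fixed the equation pins down $c=2^{k-1}$ uniquely, which is exactly what the second assertion requires for this family.

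For the uniqueness statement I would put $a=b$, so that \eqref{psi^2ab} collapses to the single equation $c^2=\psi^2(a)-2a^2$. Writing $v_p$ for the $p$-adic valuation, one has from \eqref{psiform1} the identity $\psi(a)=\prod_{p\mid a}p^{\,v_p(a)-1}(p+1)$. If $a$ is odd, every such $p$ is odd, so each $p+1$ is even and $\psi(a)$ is even; then $\psi^2(a)\equiv0\pmod4$ while $2a^2\equiv2\pmod4$, which would force the impossible $c^2\equiv2\pmod4$. Hence $a=2^km$ with $m$ odd and $k\ge1$, and I would split according to the number of distinct odd prime divisors of $m$.

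When $m$ has at least two distinct odd prime factors I would argue by the $2$-adic valuation: here $v_2(\psi(a))=(k-1)+\sum_{p\mid m}v_2(p+1)\ge k+1$, since each odd $p$ gives $v_2(p+1)\ge1$ and there are at least two of them. Then $v_2(\psi^2(a))\ge2k+2>2k+1=v_2(2a^2)$, so $v_2(c^2)=2k+1$ is odd, which is absurd for a square. The same valuation argument also disposes of the single-prime case $m=p^r$ whenever $p\equiv3\pmod4$, because then $v_2(p+1)\ge2$ already yields $v_2(\psi(a))\ge k+1$.

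The decisive case is $a=2^kp^r$ with a single odd prime $p$, where $\psi(a)=3\cdot2^{k-1}p^{r-1}(p+1)$ and expanding gives $c^2=4^{k-1}p^{2r-2}\bigl(p^2+18p+9\bigr)$; since the first two factors are perfect squares, $c\in\mathbb{Z}$ forces $p^2+18p+9$ to be a perfect square. Completing the square recasts this as $(p+9)^2-s^2=72$, i.e.\ $(p+9-s)(p+9+s)=72$ with both factors of the same (hence even) parity, and the even factorisations $72=2\cdot36=4\cdot18=6\cdot12$ yield only $p\in\{0,2,10\}$, none of them an odd prime. Thus no solution survives except $m=1$, i.e.\ $a=2^k$ with $c=2^{k-1}$, which completes the classification. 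I expect this prime-power case to be the main obstacle, precisely because the valuation trick breaks down for $p\equiv1\pmod4$ and one must instead invoke the finiteness of the integer solutions of $p^2+18p+9=\square$ obtained from the factorisation of $72$.
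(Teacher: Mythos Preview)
Your argument is correct. The verification of the family $(2^k,2^k,2^{k-1})$ and the elimination of odd $a$ via $c^2\equiv 2\pmod 4$ match the paper's Cases~1 and~2. Where you diverge is in handling $a=2^k m$ with $m>1$ odd: the paper treats this uniformly by showing that $H=9P^2-8Q^2$ (with $P=\prod_{p\mid m}(p+1)$, $Q=\prod_{p\mid m}p$) satisfies $H\equiv 8$ or $12\pmod{16}$ according as $P\equiv 0$ or $2\pmod 4$, hence is never a square. You instead split this into a $2$-adic valuation argument (forcing $v_2(c^2)=2k+1$ whenever $\sum_{p\mid m}v_2(p+1)\ge 2$) and, for the residual single-prime case, a full solution of the Diophantine equation $p^2+18p+9=s^2$ via the factorisations of $72$. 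Your valuation step is essentially a reformulation of the paper's $H\equiv 8\pmod{16}$ subcase, but your treatment of the remaining single-prime case is genuinely different: the paper disposes of it with the congruence $H\equiv 12\pmod{16}$, whereas you determine \emph{all} integer solutions of $(p+9-s)(p+9+s)=72$. The paper's route is shorter and more uniform; yours is a bit more case-laden but has the merit of pinning down exactly why no prime can occur, and your observation that the valuation trick fails precisely for $p\equiv 1\pmod 4$ is the correct diagnosis of where the extra work is needed. (A trivial remark: the case $a=1$ is not covered by your ``$\psi(a)$ is even'' claim, but it is immediately excluded since $\psi^2(1)-2<0$.)
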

From Theorem \ref{Theorem2} it follows that there exist infinitely many $\psi$-quadratic triples of the form $(2^k, 2^k, 2^{k-1})$.
\begin{proof} 
We consider three cases.

\smallskip

\textbf{Case 1} Let $(a, b, c)$ be triple such that
\begin{equation}\label{Case1a}
a=b=2^k\,,\quad c=2^{k-1}\,, \quad k\geq1\,.
\end{equation}
From \eqref{psiform1} and \eqref{Case1a}, we get
\begin{equation}\label{psi}
\psi^2(a)=\psi^2(b)=9\cdot2^{2(k-1)}=2^{2k}+2^{2k}+2^{2(k-1)}\,.
\end{equation}
Consequently $(2^k, 2^k, 2^{k-1})$ is a $\psi$-quadratic triple.

\smallskip

\textbf{Case 2} Let $(a, b, c)$ be $\psi$-quadratic triple such that
\begin{equation}\label{Case2a}
a=b=p^{a_1}_1\cdots p^{a_r}_r\,,\quad r\geq1\,,
\end{equation}
where $p_1,\ldots,p_r$ are odd prime numbers. Using \eqref{psiform1} and \eqref{Case2a}, we write
\begin{equation}\label{psi^2-2a^2}
\psi^2(a)-2a^2=p_1^{2(a_1-1)}\cdots p_r^{2(a_r-1)}(p_1+1)^2\cdots(p_r+1)^2-2p^{2a_1}_1\cdots p^{2a_r}_r \,.
\end{equation}
Now \eqref{psi^2ab} and \eqref{psi^2-2a^2} yield
\begin{equation*}
p_1^{2(a_1-1)}\cdots p_r^{2(a_r-1)}\big[(p_1+1)^2\cdots(p_r+1)^2-2p^{2}_1\cdots p^{2}_r\big]=c^2 \,.
\end{equation*}
Therefore 
\begin{equation}\label{F}
F=(p_1+1)^2\cdots(p_r+1)^2-2p^{2}_1\cdots p^{2}_r 
\end{equation}
must be perfect square. Put
\begin{equation}\label{AB}
A=(p_1+1)\cdots(p_r+1)\,,   \qquad  B=p_1\cdots p_r \,.
\end{equation}
By \eqref{F} and \eqref{AB}, we have
\begin{equation*}
F=A^2-2B^2\,.
\end{equation*}
Since $A$ is even and $B$ is odd, we deduce
\begin{equation*}
F \equiv 0-2 \equiv 2 \pmod 4\,,
\end{equation*}
which means that $F$ is not a perfect square, because the squares are congruent only to 0 or 1 modulo 4.

\smallskip

\textbf{Case 3}  Let $(a, b, c)$ be $\psi$-quadratic triple such that
\begin{equation}\label{Case3a}
a=b=2^kp^{a_1}_1\cdots p^{a_r}_r\,,\quad k\geq1\,,\quad r\geq1\,.
\end{equation}
From \eqref{psiform1} and \eqref{Case3a}, we derive
\begin{equation}\label{psi22a}
\psi^2(a)-2a^2=9\cdot2^{2(k-1)}p_1^{2(a_1-1)}\cdots p_1^{2(a_r-1)}(p_1+1)^2\cdots(p_r+1)^2-2^{2k+1}p^{2a_1}_1\cdots p^{2a_r}_r \,.
\end{equation}
Now \eqref{psi^2ab} and \eqref{psi22a} imply
\begin{equation*}
2^{2(k-1)}p_1^{2(a_1-1)}\cdots p_1^{2(a_r-1)}\big[9(p_1+1)^2\cdots(p_r+1)^2-8p^{2}_1\cdots p^{2}_r\big]=c^2 \,.
\end{equation*}
Hence
\begin{equation}\label{H}
H=9(p_1+1)^2\cdots(p_r+1)^2-8p^{2}_1\cdots p^{2}_r 
\end{equation}
must be perfect square. Set
\begin{equation}\label{PQ}
P=(p_1+1)\cdots(p_r+1)\,,   \qquad  Q=p_1\cdots p_r \,.
\end{equation}
By \eqref{H} and \eqref{PQ}, we write 
\begin{equation*}
H=9P^2-8Q^2\,.
\end{equation*}

\smallskip

\textbf{Case 3.1}
\begin{equation*}
P\equiv 0 \pmod 4\,.
\end{equation*}
Since $Q$ is odd, we obtain
\begin{equation*}
H \equiv 0-8 \equiv 8 \pmod {16}\,,
\end{equation*}
which means that $H$ is not a perfect square, because the squares are congruent only to 0, 1, 4 or 9 modulo 16.

\smallskip

\textbf{Case 3.2}
\begin{equation*}
P\equiv 2 \pmod 4\,.
\end{equation*}
We have
\begin{equation*}
P^2\equiv 4 \pmod {16}\,.
\end{equation*}
Since $Q$ is odd, we get
\begin{equation*}
H \equiv 9\cdot4-8 \equiv 28 \equiv 12 \pmod {16}\,,
\end{equation*}
which means that $H$ is not a perfect square, because the squares are congruent only to 0, 1, 4 or 9 modulo 16.
This completes the proof of Theorem \ref{Theorem2}.
\end{proof}
Several $\psi$-quadratic triples are listed in the table below.
\begin{center}
\begin{tabular}[t]{|p{35.1em}|}
\hline 
\hspace{54mm} $\Psi$-Quadratic triples \\
\hline
$(2, 2, 1)$, $(4, 4, 2)$, $(8, 8, 4)$, $(16, 16, 8)$, $(32, 32, 16)$, $(64, 64, 32)$, $(128, 128, 64)$, \\
$(256, 256, 128)$, (512, 512, 256), $(1024, 1024, 512)$, $(2048, 2048, 1024)$, \\
$(4096, 4096, 2048)$, $(8192, 8192, 4096)$, $(16384, 16384, 8192)$, $(32768, 32768, 16384)$,   \\ 
$(65536, 65536, 32768)$, $(131072, 131072, 65536)$, $(262144, 262144, 131072)$  \\
\hline
\end{tabular}
\captionof{table}{}\label{Table1}
\end{center}

\begin{question}
Do there exist any $\psi$-quadratic triples other than those of the form
\begin{equation*}
(2^k, 2^k, 2^{k-1})\,,\quad k\geq1\,?
\end{equation*}
\end{question}

\section{$\mathbf{\Psi}$-Quadratic quadruples}
\indent

The positive integers $a\leq b\leq c$ and $d$ form a $\psi$-quadratic quadruple if 
\begin{equation*}
\psi^2(a)=\psi^2(b)=\psi^2(c)=a^2+b^2+c^2+d^2\,.
\end{equation*}
Several $\psi$-quadratic quadruples are listed in the table below.
\begin{center}
\begin{tabular}[t]{|p{35em}|}
\hline 
\hspace{52mm} $\Psi$-Quadratic quadruples \\
\hline
$(6, 6, 6, 6)$, $(12, 12, 12, 12)$, $(18, 18, 18, 18)$, $(18, 22, 22, 2)$, $(24, 24, 24, 24)$, \\
$(36, 36, 36, 36)$, $(36, 44, 44, 4)$, $(48, 48, 48, 48)$, $(54, 54, 54, 54)$, $(72, 72, 72, 72)$, \\
$(72, 88, 88, 8)$, $(96, 96, 96, 96)$, $(108, 108, 108, 108)$, $(144, 144, 144, 144)$, \\
$(144, 176, 176, 16)$, $(162, 162, 162, 162)$, $(192, 192, 192, 192)$, $(216, 216, 216, 216)$, \\
$(288, 288, 288, 288)$, $(288, 352, 352, 32)$, $(324, 324, 324, 324)$, $(384, 384, 384, 384)$, \\
$(432, 432, 432, 432)$, $(486, 486, 486, 486)$, $(576, 576, 576, 576)$, $(576, 704, 704, 64)$, \\ 
$(648, 648, 648, 648)$, $(768, 768, 768, 768)$, $(864, 864, 864, 864)$, $(972, 972, 972, 972)$  \\
$(1152, 1152, 1152, 1152)$, $(1152, 1408, 1408, 128)$, $(1296, 1296, 1296, 1296)$    \\
\hline
\end{tabular}
\captionof{table}{}\label{Table2}
\end{center}

\section{$\mathbf{\Psi}$-Cubic triples}
\indent

The positive integers $a$, $b$, and $c$ form a $\psi$-cubic triple if 
\begin{equation*}
\psi^3(a)=a^3+b^3+c^3\,.
\end{equation*}
Several $\psi$-cubic triples are listed in the table below.
\begin{center}
\begin{tabular}[t]{|p{37em}|}
\hline 
\hspace{62mm} $\Psi$-Cubic triples \\
\hline
$(4, 3, 5)$, $(5, 3, 4)$, $(6, 8, 10)$, $(8, 6, 10)$, $(12, 16, 20)$, $(16, 12, 20)$, $(18, 24, 30)$, $(24, 32, 40)$,\\
$(25, 15, 20)$, $(32, 24, 40)$, $(36, 48, 60)$, $(48, 64, 80)$, $(53, 12, 19)$, $(54, 72, 90)$, $(58, 59, 69)$,\\
$(64, 48, 80)$, $(72, 96, 120)$, $(96, 128, 160)$, $(102, 26, 208)$, $(102, 117, 195)$, $(108, 144, 180)$,\\
$(116, 118, 138)$, $(118, 116, 138)$, $(125, 75, 100)$, $(128, 96, 160)$, $(144, 192, 240)$,\\
$(162, 216, 270)$, $(192, 256, 320)$, $(204, 52, 416)$, $(204, 234, 390)$, $(216, 288, 360)$,\\
$(232, 236, 276)$, $(236, 232, 276)$, $(256, 192, 320)$, $(258, 126, 504)$, $(288, 384, 480)$,\\
$(306, 78, 624)$, $(306, 351, 585)$, $(324, 432, 540)$, $(384, 512, 640)$, $(408, 104, 832)$,  \\
$(408, 468, 780)$, $(426, 6, 828)$, $(426, 646, 668)$, $(432, 576, 720)$, $(1615, 1065, 1670)$  \\   
\hline
\end{tabular}
\captionof{table}{}\label{Table3}
\end{center}

\section{$\mathbf{\Psi}$-Cubic quadruples}
\indent

The positive integers $a\leq b$ and $c\leq d$  form a $\psi$-cubic quadruple if 
\begin{equation*}
\psi^3(a)=\psi^3(b)=a^3+b^3+c^3+d^3\,.
\end{equation*}
Several $\psi$-cubic quadruples are listed in the table below.
\begin{center}
\begin{tabular}[t]{|p{35.1em}|}
\hline 
\hspace{56mm} $\Psi$-Cubic quadruples \\
\hline
$(14, 16, 5, 19)$, $(28, 32, 10, 38)$, $(30, 45, 43, 56)$, $(42, 48, 40, 86)$, $(54, 68, 58, 84)$, \\
$(56, 64, 20, 76)$, $(60, 72, 63, 129)$, $(84, 96, 80, 172)$, $(90, 135, 129, 168)$,  \\
$(108, 136, 116, 168)$, $(112, 128, 40, 152)$, $(120, 126, 144, 258)$, $(124, 161, 52, 95)$,\\
$(126, 144, 120, 258)$, $(150, 225, 215, 280)$, $(168, 192, 160, 344)$, $(174, 200, 12, 322)$,\\
$(180, 216, 189, 387)$, $(216, 272, 232, 336)$, $(224, 256, 80, 304)$, $(240, 252, 288, 516)$,\\
$(252, 288, 240, 516)$, $(270, 405, 387, 504)$, $(308, 322, 78, 504)$, $(336, 384, 320, 688)$,\\
$(348, 400, 24, 644)$, $(360, 378, 432, 774)$, $(378, 432, 360, 774)$, $(432, 544, 464, 672)$,\\
$(448, 512, 160, 608)$, $(450, 675, 645, 840)$, $(480, 504, 576, 1032)$, $(504, 576, 480, 1032)$\\     
\hline
\end{tabular}
\captionof{table}{}\label{Table4}
\end{center}

\section{$\mathbf{\Psi}$-Cubic quintuples}
\indent

The positive integers $a\leq b\leq c$ and $d\leq e$ form a $\psi$-cubic quintuple if 
\begin{equation*}
\psi^3(a)=\psi^3(b)=\psi^3(c)=a^3+b^3+c^3+d^3+e^3\,.
\end{equation*}
Several $\psi$-cubic quintuples are listed in the table below.
\begin{center}
\begin{tabular}[t]{|p{37em}|}
\hline 
\hspace{58mm} $\Psi$-Cubic quintuples \\
\hline
$(6, 9, 9, 3, 3)$, $(12, 14, 16, 7, 17)$, $(18, 27, 27, 9, 9)$, $(24, 28, 32, 14, 34)$, $(30, 36, 40, 48, 50)$,   \\
$(30, 55, 55, 11, 23)$, $(40, 44, 46, 12, 50)$, $(40, 46, 51, 29, 38)$, $(45, 46, 51, 21, 35)$,                    \\                                                            
$(48, 56, 64, 28, 68)$, $(56, 63, 77, 7, 13)$, $(62, 62, 69, 4, 43)$, $(54, 81, 81, 27, 27)$,                       \\
$(60, 72, 72, 75, 117)$, $(60, 72, 80, 96, 100)$, $(66, 72, 72, 45, 123)$, $(66, 72, 115, 2, 93)$,                   \\
$(66, 88, 92, 62, 100)$, $(70, 88, 119, 12, 65)$, $(70, 92, 99, 21, 96)$, $(80, 88, 92, 24, 100)$,                    \\
$(92, 92, 94, 36, 82)$, $(78, 78, 98, 82, 132)$, $(96, 112, 128, 56, 136)$, $(96, 124, 128, 69, 123)$,                 \\
$(930, 1280, 2101, 74, 379)$, $(960, 1152, 1152, 1200, 1872)$, $(960, 1152, 1280, 1536, 1600)$,                         \\
$(960, 1528, 1532, 117, 1611)$, $(1056, 1152, 1152, 720, 1968)$, $(1056, 1408, 1472, 992, 1600)$                         \\
\hline
\end{tabular}
\captionof{table}{}\label{Table5}
\end{center}

\section{$\mathbf{\Psi}$-Quartic quintuples}
\indent

The positive integers $a$ and $b\leq c\leq d\leq e$ form a $\psi$-quartic quintuple if  
\begin{equation*}
\psi^4(a)=a^4+b^4+c^4+d^4+e^4\,.
\end{equation*}
Several $\psi$-quartic quintuples are listed in the table below.
\begin{center}
\begin{tabular}[t]{|p{35em}|}
\hline 
\hspace{53mm} $\Psi$-Quartic quintuples \\
\hline
$(538, 96, 532, 548, 648)$, $(34432, 6144, 34048, 35072, 41472)$, \\
$(68864, 12288, 68096, 70144, 82944)$, $(137728, 24576, 136192, 140288, 165888)$, \\     
$(275456, 49152, 272384, 280576, 331776)$, (550912, 98304, 544768, 561152, 663552) \\
\hline
\end{tabular}
\captionof{table}{}\label{Table6}
\end{center}

\section{$\mathbf{\Psi}$-Quintic quintuples}
\indent

The positive integers $a$ and $b\leq c\leq d\leq e$ form a $\psi$-quintic quintuple if 
\begin{equation*}
\psi^5(a)=a^5+b^5+c^5+d^5+e^5\,.
\end{equation*}
Several $\psi$-quintic quintuples are listed in the table below.
\begin{center}
\begin{tabular}[t]{|p{26em}|}
\hline 
\hspace{36mm} $\Psi$-Quintic quintuples \\
\hline
$(46, 19, 43, 47, 67)$, $(92, 38, 86, 94, 134)$, $(94, 38, 86, 92, 134)$, \\
$(946, 418, 1012, 1034, 1474)$, $(1139, 323, 731, 782, 799)$\\     
\hline
\end{tabular}
\captionof{table}{}\label{Table7}
\end{center}

\vspace{7.8mm}

\vskip20pt
\footnotesize
\begin{flushleft}
S. I. Dimitrov\\
\quad\\
Faculty of Applied Mathematics and Informatics\\
Technical University of Sofia \\
Blvd. St.Kliment Ohridski 8 \\
Sofia 1000, Bulgaria\\
e-mail: sdimitrov@tu-sofia.bg\\
\end{flushleft}

\begin{flushleft}
Department of Bioinformatics and Mathematical Modelling\\
Institute of Biophysics and Biomedical Engineering\\
Bulgarian Academy of Sciences\\
Acad. G. Bonchev Str. Bl. 105, Sofia 1113, Bulgaria \\
e-mail: xyzstoyan@gmail.com\\
\end{flushleft}

\end{document}